\newlength{\standardunitlength}
\newcommand{\bea}{\begin{eqnarray}}
\newcommand{\ena}{\end{eqnarray}}
\newcommand{\beas}{\begin{eqnarray*}}
\newcommand{\enas}{\end{eqnarray*}}
\newcommand{\ignore}[1]{}
\def\O{\Omega}
\def\e{\epsilon}
\def\l{\lambda}
\def\t{\theta}
\newcommand{\GL}{\operatorname{GL}}
\newcommand{\SL}{\operatorname{SL}}
\newcommand{\F}{\mathbb{F}}
\newcommand{\CC}{\mathbb{C}}
\newcommand{\ZZ}{\mathbb{Z}}
\newtheorem{prop}{Proposition}[section]
\newtheorem{lemma}[prop]{Lemma}
\newtheorem{cor}[prop]{Corollary}
\newtheorem{theorem}[prop]{Theorem}
\begin{document}

\title[Permutation Representations of Nonsplit Extensions]{Permutation Representations of Nonsplit Extensions involving  Alternating Groups}



\author{Robert M. Guralnick}
\address{Department of Mathematics\\University of Southern California\\
        Los Angeles, CA 90089, USA}
\email{guralnic@usc.edu}

\author{Martin W. Liebeck}
\address{Department of Mathematics, Imperial College,  London SW7 2AZ, UK}
\email{m.liebeck@imperial.ac.uk}

\thanks{Guralnick was partially supported by NSF grant DMS-1600056.  This work was started during
the conference Midrasha 20 and the authors wish to thank the Institute for Advanced Studies at Hebrew
University for its support.   We thank L\'aszl\'o Babai for discussing the problem with us.
 We also thank Eamonn O'Brien for help with some MAGMA computations. }


\date{\today}

\begin{abstract}    L. Babai has shown that a faithful permutation representation of a nonsplit extension of a group by an alternating group $A_k$ must have degree at least $k^2(\frac{1}{2}-o(1))$, and has asked how sharp this lower bound is. We prove that Babai's bound is sharp (up to a constant factor), by showing that 
there are such nonsplit extensions that have faithful permutation representations
of degree $\frac{3}{2}k(k-1)$. We also reprove Babai's quadratic lower bound with the constant $\frac{1}{2}$ improved to 1 (by completely different methods).
\end{abstract}

\maketitle

\begin{center} {\it Dedicated to our friend and colleague Alex Lubotzky} \end{center}

\section{Introduction}

Let $A_k$ and $S_k$ denote the alternating and symmetric groups of degree $k$.  
We consider finite group extensions $H$ of the form  
\begin{equation}\label{exten}
1 \rightarrow M \rightarrow H \rightarrow A_k \rightarrow 1,
\end{equation}
where $M \ne 1$ and the extension is nonsplit.

In a November 2016 lecture at the Jerusalem conference ``60 Faces to Groups" in honour of Alex Lubotzky's 60th birthday, Laci Babai discussed faithful permutation representations of such groups $H$, and 
noted that he had proved a lower bound of the form $k^2(\frac{1}{2}-o(1))$ for the degree of such a representation; this bound appears in \cite{Ba}. He asked how close to best possible his bound is, suggesting (perhaps provocatively) that there might be an exponential lower bound for the degree of the form $C^k$ for some constant $C>1$.  

In this note, we show that Babai's lower bound is in fact sharp (up to a constant factor), and we also give a different proof of his quadratic lower bound. Here are our two main results.

\begin{theorem} \label{lower}   Let $k > 20$.   If $H$ is a nonsplit extension as in $(\ref{exten})$, and $H$ embeds in
$S_{\ell}$ for some $\ell$, then $\ell \ge k(k-1)$.
\end{theorem}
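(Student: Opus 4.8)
The plan is to induct on $|H|$, reducing to the case where $H$ acts faithfully and transitively, and then to analyse a point stabiliser using the classification of the small-index subgroups of $A_k$. Write $\mu(G)$ for the least degree of a faithful permutation representation of $G$, so the claim is $\mu(H)\ge k(k-1)$. First note that any minimal normal subgroup $N$ of $H$ lies in $M$: otherwise $N\cap M=1$, so $N$ maps isomorphically onto a normal subgroup of $A_k$, forcing $N\cong A_k$, a complement to $M$, contradicting nonsplitness. Now take $H\le S_\ell$ faithfully. Since $M\ne1$ it acts nontrivially on some orbit $\Delta$; let $K=\mathrm{core}_H(H_\Delta)$ be the kernel of the action on $\Delta$, so $M\not\le K$ and $|\Delta|\ge2$. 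If $K\not\le M$ then $MK=H$ and $K$ is a nonsplit extension of $K\cap M\ne1$ by $A_k$ with $|K|<|H|$; the $\ell$-point action restricts faithfully to $K$, so induction gives $\ell\ge\mu(K)\ge k(k-1)$. If $1\ne K\le M$ then, by a standard argument, $H/K$ is a smaller nonsplit extension of $M/K$ by $A_k$ acting faithfully on $\Delta$, and induction finishes. So we may assume $K=1$, i.e. $H$ is faithful and transitive with core-free point stabiliser $U$; then $\ell=[H:U]$, and $M\not\le U$ (else $M\le\mathrm{core}_H(U)=1$). Setting $\bar U=UM/M\le A_k$ gives $\ell=[A_k:\bar U]\cdot[M:M\cap U]$ with $[M:M\cap U]\ge2$.

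If $[A_k:\bar U]\ge k(k-1)$ we are done, so suppose not. For $k>20$ the subgroups of $A_k$ of index less than $k(k-1)$ are exactly $A_k$, the point stabilisers $A_{k-1}$, and the $2$-subset stabilisers, so $\bar U$ is one of these. If $\bar U$ is a $2$-subset stabiliser then $\ell=\binom k2\,[M:M\cap U]\ge2\binom k2=k(k-1)$. If $\bar U=A_k$ then $U$ maps onto $A_k$; if $U\cap M=1$ then $U$ is a complement, contradicting nonsplitness, so $U$ is a nonsplit extension of $U\cap M\ne1$ by $A_k$ with $|U|<|H|$, and since the $\ell$-point action restricts faithfully to $U$, induction gives $\ell\ge\mu(U)\ge k(k-1)$. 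The remaining, crucial, case is $\bar U=A_{k-1}$, where $\ell=k\cdot[M:M\cap U]$ and we must prove $[M:M\cap U]\ge k-1$.

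In this case I would further reduce to $M$ a minimal normal subgroup. If $M$ is nonabelian then, as $\mathrm{Out}$ of a finite simple group is solvable, $A_k$ permutes the simple factors of $M$ nontrivially, hence with an orbit of length at least $k$; restricting $U$ to the factors in an $A_{k-1}$-orbit and using the structure of subdirect products of nonabelian simple groups forces $[M:M\cap U]$ to be large — indeed this case should push $\ell$ far above $k(k-1)$. If $M$ is abelian it is an irreducible $\F_pA_k$-module, and a nonsplit extension can exist only when $H^2(A_k,M)\ne0$, which pins $M$ down to a short explicit list — essentially the trivial module (so $H=2.A_k$), the fully deleted permutation / heart modules in characteristic $2$, and possibly a module in characteristic $3$. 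Now $M\cap U$ is an $A_{k-1}$-submodule of index $q:=[M:M\cap U]$, and since $A_{k-1}$ has no nontrivial permutation action of degree below $k-1$, if $q<k-1$ then $A_{k-1}$ acts trivially on $M/(M\cap U)$, so $M\cap U\supseteq[M,A_{k-1}]$. For the modules on the list $[M,A_{k-1}]$ is either all of $M$ — for instance the characteristic-$2$ heart module for $k$ even restricts to $A_{k-1}$ irreducibly — which gives $M\le U$ and contradicts core-freeness; or it has small codimension, and then one shows directly that no core-free $U$ with $M\cap U\supseteq[M,A_{k-1}]$ exists, because the induced extension of $M/[M,A_{k-1}]$ by $A_{k-1}$ is itself nonsplit, i.e. the relevant restricted cohomology class is nonzero (for $H=2.A_k$ this is just the nonsplitness of the double cover of $A_{k-1}$ inside $2.A_k$). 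Either way $q\ge k-1$ and $\ell\ge k(k-1)$.

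I expect the heart of the argument to be precisely this last case $\bar U=A_{k-1}$: identifying exactly which modules $M$ support a nonsplit extension, and for the borderline ones — those possessing a trivial one-dimensional $A_{k-1}$-quotient — carrying out the cohomological computation that rules out a small value of $[M:M\cap U]$ (or checking that such modules simply do not support a nonsplit extension, making the case vacuous). By contrast, the inductive orbit reduction, the small-index subgroup classification, and the $2$-subset and $\bar U=A_k$ cases should be comparatively routine.
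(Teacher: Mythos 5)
Your overall skeleton matches the paper's: minimal counterexample, reduction to a corefree subgroup $U$ of index $<k(k-1)$, the classification of small-index subgroups of $A_k$ forcing $UM/M\cong A_{k-1}$, the observation that a nontrivial $A_{k-1}$-module has dimension $\ge k-3$ so that $M\cap U$ has prime index $p$ and $M$ is an elementary abelian $p$-group embedding in $(\F_p)_{A_{k-1}}^{A_k}$, and then Eckmann--Shapiro plus the Kleshchev--Premet $H^2$ computations to kill almost everything. But there is a genuine gap in your endgame. Your reduction ``to $M$ a minimal normal subgroup'' is not justified: you have already fixed a faithful transitive action, and passing to $H/N$ for $1\ne N<M$ destroys faithfulness, while the nonsplit extension that actually survives the cohomological sieve lives on a module that is \emph{not} irreducible. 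Concretely, when $p=2$ and $p\mid k$, the induced module $(\F_2)_{A_{k-1}}^{A_k}$ is uniserial of dimension $k$ with socle and head trivial and heart $D^{(k-1,1)}$; since $H^2(A_k,D^{(k-1,1)})=0$ the only candidates for $M$ are $\F_2$ (the double cover, which you and the paper both dispose of) and the full $k$-dimensional uniserial module. Your irreducibility-based list never sees this second case.

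In that surviving case the configuration you need to exclude has $[M:M\cap U]=2$ and $\ell=2k$ --- far below your target $[M:M\cap U]\ge k-1$ --- and your proposed mechanism (nonvanishing of the restricted class for $M/[M,A_{k-1}]$, a $2$-dimensional trivial $A_{k-1}$-module) does not obviously rule out a corefree $U$ of index $2k$; at best it leads to a delicate computation you have not done. The paper needs a genuinely different idea here: from $[H:H_0]=2k$ and the uniqueness of the minimal normal subgroup it extracts a faithful irreducible $\CC H$-module $W$ with $\dim W<2k$; Clifford theory applied to the $H$-orbits on the nontrivial linear characters of $M$ (of size $1$, $k$, or $\ge k(k-1)/2$) forces $\dim W=k$, so the perfect group $H$ embeds in $\SL_k(\CC)$, which is impossible because $\SL_k(\CC)$ contains no elementary abelian $2$-subgroup of rank $k$ while $M\cong\F_2^k$. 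Nothing in your sketch substitutes for this step, so as written the proof does not close.
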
 

With significantly more work, it is likely that the lower bound $k(k-1)$ in this result could be replaced by $\frac{3}{2}k(k-1)$. 
This would be best possible, by the next result (taking $p=3$ in part (i)).  

\begin{theorem} \label{upper}   Let $k \ge 10$ and let $p$ be a prime.
\begin{itemize} 
\item[{\rm (i)}] If $p$ is odd and $p$ divides $k$,  then there is a nonsplit extension $H$ as in $(\ref{exten})$, with $M$ an
elementary abelian $p$-group, such that $H$ has a faithful permutation representation of degree $\frac{1}{2}pk(k-1)$.
\item[{\rm (ii)}]  There is a nonsplit extension $H$ as in $(\ref{exten})$, with $M$ an elementary abelian
$2$-group, such that $H$ has a faithful transitive permutation 
representation of degree $2k(k-1)$.
\end{itemize}
\end{theorem}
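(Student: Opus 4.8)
The strategy in both parts is to realize $H$ inside a wreath-product-like permutation group acting on a union of orbits indexed by the $2$-element subsets (or ordered pairs) of $\{1,\dots,k\}$, and to control the cohomology class of the extension by a careful choice of the module $M$.

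For part (i): Let $p$ be odd dividing $k$. I would take $M$ to be a suitable quotient of the permutation module $\F_p^{\binom{k}{2}}$ for $A_k$ on $2$-subsets — or better, build $M$ from the $\F_p$-span of the $k(k-1)$ ordered pairs $(i,j)$, $i\ne j$, modulo the relation $(i,j)+(j,i)=0$, which already has $\F_p$-dimension $\binom{k}{2}$ and carries the sign-twisted action of transpositions. The point is that $H^2(A_k,M)\ne 0$ for an appropriate such $M$ when $p\mid k$: the hypothesis $p\mid k$ is exactly what forces a nonsplit extension, since it makes the all-ones-type vector behave degenerately and produces a nonzero $2$-cocycle. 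Concretely I expect $H$ to be constructed as the stabilizer in $(\ZZ/p^2\ZZ)\wr S_k$ (or an index-$2$ subgroup thereof) of a hyperplane-type condition, restricted over $A_k$; the natural action of this group on the $pk(k-1)$-element set consisting of $p$ copies of the set of ordered pairs, or equivalently a coset space of index $\frac12 pk(k-1)$, gives the faithful permutation representation. I would then verify (a) faithfulness — $M$ acts faithfully because the pairs $(i,j)$ separate the coordinates; (b) the degree count $\frac12 pk(k-1)$; and (c) nonsplitness, by exhibiting an explicit element of order $p^2$ mapping to a $p$-cycle-power in $A_k$, or by a direct cohomological computation that the relevant extension class is nonzero precisely because $p\mid k$.

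For part (ii): Here the construction is cleaner since no divisibility hypothesis is needed and a transitive representation is wanted. I would take $M = \F_2^{k(k-1)}$ (or a codimension-one submodule), the permutation module on ordered pairs, and let $H$ be a nonsplit extension sitting inside $\F_2 \wr S_k$ restricted to $A_k$; the obstruction $H^2(A_k,\F_2^{k(k-1)})\ne 0$ comes from a transposition-type $2$-cocycle, using that over $\F_2$ the sign character is trivial so one can exploit the failure of a certain splitting on $2$-subsets. The transitive faithful action of degree $2k(k-1)$ then arises as the action of $H$ on $M$-cosets inside a coset space of size $2k(k-1)$ — e.g. on the set of pairs (ordered pair, $\F_2$-value) twisted so that $A_k$ acts transitively on the index set and the fibers of size $2$ are permuted nontrivially, making the whole action transitive. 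I would check transitivity (the base $A_k$ is transitive on ordered pairs for $k\ge 3$, and the fiber action glues the two sheets), faithfulness of $H$, and nonsplitness via an explicit order-$4$ element or a $2$-cocycle computation.

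The main obstacle will be (c) in each case: verifying that the extension is genuinely nonsplit, i.e. that the chosen $2$-cocycle is not a coboundary. This is where the arithmetic hypotheses enter ($p\mid k$ in (i), and the parity phenomenon over $\F_2$ in (ii)), and it likely requires either a hands-on construction of an element whose order exceeds that of its image, or an explicit restriction-to-a-subgroup argument (e.g. restricting to an elementary abelian $p$-subgroup or to a subgroup generated by a few disjoint cycles) where the cohomology is computable and demonstrably nonzero. A secondary technical point is ensuring the permutation representation is faithful on all of $H$ and not merely on $A_k$ — this needs the module $M$ to be chosen so that its nonzero elements are all ``detected'' by the point stabilizer being core-free, which should follow from the pairs $(i,j)$ generating the dual module, but must be stated carefully for the codimension-one submodules that may be needed to make the cohomology nonzero.
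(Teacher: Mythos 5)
Your overall framework (permutation/induced modules on pairs of points, coset actions of the stated degrees, a cohomological obstruction tied to the arithmetic of $k$) matches the paper's in spirit, but in both parts the specific module you propose is one for which the construction provably breaks, and the step you defer --- ``verify the cocycle is not a coboundary'' --- is not where the real difficulty lies. In (i), your module of ordered pairs modulo $(i,j)+(j,i)=0$ is exactly the induced module $\theta_Y^G$ (where $Y\cong S_{k-2}$ is the stabiliser of a $2$-set and $\theta$ the nontrivial character), and by the Eckmann--Shapiro lemma $H^2(A_k,\theta_Y^G)\cong H^2(Y,\theta)=0$ for $p$ odd: the full module admits no nonsplit extension at all. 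The paper must pass to the proper submodule $M\subset\theta_Y^G$ with composition factors $D^{(k-2,1,1)}$ and $D^{(k-1,1)}$, and the nonvanishing of $H^2(A_k,M)$ is obtained by dimension shifting: $H^2(G,M)\cong H^1(G,V/M)$, where $V/M$ is the codimension-one submodule of the natural permutation module $\F_p^k$, which is uniserial with nonzero $H^1$ precisely because $p\mid k$. Your intuition about the all-ones vector behaving degenerately is pointing at this, but without identifying $M$ and running the long exact sequence the key claim $H^2\neq 0$ is unsupported --- and it is false for the module you actually name. (You also never use that $p$ is odd, which the paper needs for $H^2(Y,\theta)=0$, i.e.\ to split the extension over the point stabiliser and so obtain the index-$p$ subgroup giving degree $\frac12 pk(k-1)$.)

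In (ii) the situation is worse: if $M$ is the full permutation module $(\F_2)_X^{A_k}$ on ordered pairs ($X\cong A_{k-2}$ the pointwise stabiliser of $\{1,2\}$), then the Shapiro isomorphism $H^2(A_k,M)\cong H^2(X,\F_2)$ is realised by restricting to $X$ and projecting onto the $X$-fixed coordinate; hence for \emph{any} nonsplit extension with this $M$, the induced extension of $M/M_0\cong\F_2$ by $X$ is automatically nonsplit, so the corefree subgroup of index $2k(k-1)$ that your action requires does not exist. The paper avoids this by taking $M$ to be the irreducible summand $S^{(k-2,2)}$ of the unordered-pairs module, which forces the congruence $k\equiv 3\pmod 4$ (to get the decomposition $P\cong \F_2\oplus S^{(k-1,1)}\oplus S^{(k-2,2)}$); the genuinely hard step is then Lemma~\ref{cocycle}, an explicit cocycle computation by induction on $k$ with Magma-verified base cases $k=7,11$, showing that $\delta(g_2,g_2)$ lies in the chosen hyperplane so that $\pi^{-1}(X)/M_0\cong \ZZ/2\times A_{k-2}$ splits. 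General $k$ is then handled by a separate restriction argument from $A_j$ with $j\equiv 3\pmod 4$. None of this is visible in your plan, and ``an explicit order-$4$ element or a $2$-cocycle computation'' substantially understates what has to be done.
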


In \S \ref{lowerbound}  we prove Theorem \ref{lower}, and in   \S \ref{example1}  and \S \ref{example2} we prove the two parts of
 Theorem \ref{upper}.   

Throughout we shall use the notation of \cite{J} for modules for symmetric groups $S_n$: for a field $F$ and a partition $\l$ of $n$, $S^\l$ denotes the Specht module, and $D^\l$ the irreducible module for $S_n$ over $F$ corresponding to $\l$. Also, if $H$ is a subgroup of $G$, and $V$ is an $FH$-module, then $V_H^G$ denotes the corresponding induced module for $G$.

\section{Lower bounds: proof of Theorem \ref{lower}}  \label{lowerbound} 

In this section we prove Theorem \ref{lower}.   Let $P(H)$ denote the minimal degree of a faithful permutation representation of a group $H$.

\begin{theorem}    Let $k>20$, and suppose $H$ is a nonsplit extension 
\[
  1 \rightarrow M \rightarrow H \rightarrow A_k \rightarrow 1.
\]
Then $P(H) \ge k(k-1)$.
\end{theorem}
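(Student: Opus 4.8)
The plan is to argue by induction on $|H|$, first reducing to a faithful transitive action, then to the case where $M$ is a chief factor of $H$, and finally running a short representation‑theoretic endgame. The reduction to transitivity rests on one remark, valid because $A_k$ is simple for $k>20$: if $N\trianglelefteq H$ and $N\cap M=1$ then $NM/M\trianglelefteq A_k$, so either $NM=M$ (whence $N=1$) or $NM=H$ with $N\cong A_k$ a complement to $M$, contradicting non‑splitness; hence $N=1$. Given $H\le S_\ell$ with $\ell=P(H)$, decompose the point set into $H$‑orbits with kernels $N_i$, $\bigcap N_i=1$: if some $N_i=1$ we pass to that orbit; otherwise each $N_i\cap M\ne1$ by the remark, and as $\bigcap(N_i\cap M)=1$ some $N_j$ fails to contain $M$, and analysing $N_jM=H$ versus $N_j\le M$ one finds inside $H$ either a proper subgroup ($N_j$, or a complement‑type subgroup) or a proper transitive quotient $H/N_j$, again a non‑split extension of a strictly smaller group by $A_k$ acting faithfully on $\le\ell$ points, so induction applies. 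The same bookkeeping — replace $H$ by a suitable proper subgroup whenever $M$ fails to be a chief factor, so that the module shrinks — lets us assume $M$ is a minimal normal subgroup. If $M$ is non‑abelian it is $T^m$ with $A_k$ transitive on the factors, forcing $m=1$ or $m\ge k$; when $m=1$ the embedding $A_k\hookrightarrow\mathrm{Out}(T)$ forces $|T|$, hence $P(H)\ge P(T)$, to dwarf $k^2$; when $m=k$ the extension turns out to be split (it lies in $T\wr A_k$); and when $m\ge\binom k2$ one has $P(H)\ge P(M)=mP(T)\ge\tfrac52k(k-1)$. So from now on $M$ is an irreducible $\F_pA_k$‑module with $0\ne[H]\in H^2(A_k,M)$, $H$ is transitive with point stabiliser $U$, and $\mathrm{Core}_H(U)=1$.

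Writing $\bar U=MU/M\le A_k$ and $W=M\cap U$, transitivity gives the identity
\[
\ell=[H:U]=[A_k:\bar U]\cdot[M:W].
\]
Here $M\not\le U$ (else $M\le\mathrm{Core}_H(U)=1$), and $\bar U\ne A_k$ (if $MU=H$ then $W\trianglelefteq H$, so $W=1$ and $U$ complements $M$); thus $[A_k:\bar U]\ge k$. Moreover $W$ is an $\bar U$‑submodule of $M$, and $\mathrm{Core}_H(U)\cap M=\bigcap_{g\in A_k}W^g=0$; since the distinct translates $W^g$ are indexed by the $[A_k:\bar U]$ cosets of $\bar U$ and each has codimension $d:=\dim_{\F_p}M/W\ge1$, this forces $[A_k:\bar U]\cdot d\ge\dim_{\F_p}M$. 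Combining,
\[
\ell=[A_k:\bar U]\,p^{\,d}\ \ge\ \frac{\dim_{\F_p}M}{d}\,p^{\,d}\ \ge\ 2\dim_{\F_p}M .
\]

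Now the dichotomy. If $[A_k:\bar U]\ge\binom k2$ then $\ell\ge p\binom k2\ge k(k-1)$, as required. Otherwise $\bar U$ is a core‑free subgroup of $A_k$ of index $<\binom k2$, and for $k>20$ the classification of subgroups of $A_k$ of small index forces $\bar U$ to be a point stabiliser $A_{k-1}$, so $\ell=kp^{\,d}$ and it suffices to prove $p^{\,d}\ge k-1$. Suppose not: then $d\le\log_2(k-2)$, so $\dim_{\F_p}M\le kd=O(k\log k)$, and the classification of low‑dimensional irreducible $\F_pA_k$‑modules together with the known values of $H^2(A_k,-)$ on them leaves only the trivial module and the natural module $D^{(k-1,1)}$. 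For the trivial module the extension is the double cover $2.A_k$ and $W=0$, so $U\cong A_{k-1}$ would be a complement to $M$ inside the preimage of $A_{k-1}$, i.e.\ $2.A_{k-1}$ would split — impossible since $k-1\ge 8$. For $M=D^{(k-1,1)}$ the splitting $\F_p^{k}=\F_p^{k}_0\oplus\F_p$ and Shapiro's lemma show $H^2(A_k,M)\ne0$ only when $p\mid k$, and then a direct computation gives $M|_{A_{k-1}}\cong D^{(k-2,1)}$, which is irreducible; hence $W=0$, $d=\dim M=k-2$, and $\ell=kp^{\,k-2}\gg k(k-1)$ — contradiction. This completes the proof.

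The hard part will be the endgame in the case $\bar U=A_{k-1}$: it relies on (a) the list of subgroups of $A_k$ of index below $\binom k2$, (b) the list of irreducible $\F_pA_k$‑modules of dimension $O(k\log k)$ carrying a non‑zero class in $H^2(A_k,-)$, and (c) the submodule structure of $D^{(k-1,1)}|_{A_{k-1}}$ and the non‑splitness of the covers of $A_{k-1}$. The single most delicate point is getting Step 1 to deliver an \emph{irreducible} $M$ rather than an arbitrary abelian one, since for a reducible $M$ built from trivial and natural composition factors — the permutation module $\F_2^{k}$ being the key example — one must instead verify by hand that no core‑free $U$ with $\bar U=A_{k-1}$ and small $d$ exists, by computing the relevant components of $\mathrm{res}^{A_k}_{A_{k-1}}[H]$.
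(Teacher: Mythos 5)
Your skeleton runs parallel to the paper's in its soft steps: minimal counterexample, reduction to a corefree point stabiliser $U$ of index $<k(k-1)$, the forced conclusion $\bar U=A_{k-1}$ via the bound on indices of subgroups of $A_k$ (the paper cites \cite[1.1]{L} for exactly this), and a cohomological endgame. But there is a genuine gap, and it sits precisely where you yourself locate "the single most delicate point": the reduction to $M$ irreducible. This reduction is not a matter of bookkeeping --- it fails. The paper's route is different: it first shows $M\le\Phi(H)$ (so $M$ is nilpotent and every quotient of $H$ by a normal subgroup properly contained in $M$ remains nonsplit), then shows $M$ embeds in the induced module $(\F_p)_Y^G$ with $Y\cong A_{k-1}$. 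When $p\mid k$ this induced module is \emph{uniserial} with socle and head trivial and heart $D^{(k-1,1)}$, and the cohomology computations eliminate every submodule except $\F_2$ and the full $k$-dimensional permutation module $M=(\F_2)_Y^G$ with $2\mid k$. That last module is reducible but indecomposable; you cannot pass to a chief factor without losing control of either nonsplitness or the permutation degree (minimal degrees of quotients can exceed that of the group, which is why the paper only ever quotients by the core of a specific small-index subgroup).

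Worse, your quantitative framework does not exclude this surviving case: for $M\cong\F_2^k$ the restriction to $A_{k-1}$ has an invariant hyperplane $W$ whose conjugates intersect trivially, so $d=1$ and $\ell=kp^d=2k$, consistent with every inequality you derive. Ruling it out is the real content of the theorem in this regime, and the paper does it with an argument absent from your proposal: $[H:H_0]=2k$ forces a faithful irreducible $\CC H$-module $W$ of dimension $<2k$; Clifford theory applied to the $H$-orbits on linear characters of $M$ (of sizes $1$, $k$, or $\ge k(k-1)/2$) forces $\dim W=k$; perfectness gives $H\hookrightarrow\SL_k(\CC)$; but an elementary abelian $2$-subgroup of $\SL_k(\CC)$ has rank at most $k-1$, while $M\cong\F_2^k$ has rank $k$. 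You would need this (or an equivalent) to close your case (a)/(b) analysis. Secondary issues: your non-abelian case is avoidable entirely (the Frattini argument makes $M$ nilpotent from the outset) and as written contains an unjustified assertion that the $m=k$ extension by $T^k$ splits; and your claim that $H^2(A_k,D^{(k-1,1)})\ne 0$ can occur when $p\mid k$ is contrary to Kleshchev--Premet, though that error is harmless since you reach a contradiction in that branch anyway.
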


\begin{proof}   Suppose false, and let $H$ be a minimal counterexample (for a fixed $k$).
Write $G=A_k$, and let $\pi$ be the projection map $H \rightarrow G$. If there is a proper subgroup $H_1$ of $H$ such that $\pi(H_1) = G$, then $H_1$ is still a nonsplit extension and so $P(H_1)\ge k(k-1)$ by the minimality of $H$. This is a contradiction, since $P(H) \ge P(H_1)$. Hence there is no such $H_1$, and so $M \le \Phi(H)$, the Frattini subgroup. In particular, $M$  is nilpotent.  

As $P(H) < k(k-1)$, $H$ has a subgroup $H_0$ such that $[H:H_0]< k(k-1)$ and $H_0$ is maximal 
subject to not containing $M$.   If $H_0$ is not corefree in $H$, its core is a nontrivial normal subgroup $N$ of $H$; but then $H/N$ is a nonsplit extension of $M/N$ by $A_k$, and $P(H/N) \le [H/N:H_0/N] <k(k-1)$, contradicting the minimality of $H$. Hence $H_0$ is corefree.

We next claim that $\pi(H_0) = Y \cong A_{k-1}$. For if not, then $[A_k:\pi(H_0)] \ge \frac{1}{2}k(k-1)$ 
(see \cite[1.1]{L}), and so $[H:H_0]\ge k(k-1)$, a contradiction.
Any nontrivial module for $Y$ has dimension at least $k-3$ (see \cite[5.3.7]{KL}). Hence if $M$ has no trivial $Y$-quotient,  then $[H:H_0] \ge  k \cdot 2^{k-3}>k(k-1)$, a contradiction.
Thus  $Y$ acts trivially on $M/M_0$, where $M_0 = M\cap H_0$.   By the maximality of $H_0$, it follows 
that $M_0$ has prime index $p$ in $M$.   Since $M_0$ is corefree, this implies that $M$ is an elementary abelian
$p$-group and hence is an $\F_pG$-module.  

There is a  surjective $Y$-homomorphism from  $M$ to $\F_p$.   By Frobenius reciprocity and the fact
that $M_0$ is corefree, it follows that as an $\F_pG$-module,  $M$ injects into the induced module $(\F_{p})_Y^G$.  

Suppose first that $p$ does not divide $k$. Then
$(\F_{p})_Y^G \cong \F_p \oplus S^{(k-1,1)}$.   If $p \ne 2$, then $H^2(G, (\F_{p})_Y^G) \cong H^2(Y, \F_p) = 0$ and also  $H^2(G, \F_p) = 0$ (see \cite{KP}) whence $H^2(G,M)=0$, a contradiction.  
If $p=2$, then the same computation shows that  $H^2(G, (\F_{2})_Y^G)$ is $1$-dimensional
and similarly $H^2(Y, \F_2)$ is $1$-dimensional (again by \cite{KP}), 
whence $H^2(G,S^{(k-1,1)}) = 0$.   Thus,  $M$ is the trivial module and
so $H$ is the double cover of $G$.  However, the double cover of $A_{k-1}$ embeds in the double cover of $A_k$, whence
$\pi(H_0)$ is not $Y$, a contradiction.


Now suppose that $p$ divides $k$. Then $(\F_{p})_Y^G$ is a uniserial module with trivial socle and head, and heart equal to the 
irreducible module $D^{(k-1,1)}$ of dimension $k-2$ (notation of \cite[p.39]{J}).
If $p \ne 2$, it follows by \cite{KP} that $H^2(G,M)=0$ for any submodule $M$ of $(\F_{p})_Y^G$, a contradiction. 
Hence $p=2$. 
It is still true by \cite{KP} that $H^2(G, D^{(k-1,1)})=0$, whence $D^{(k-1,1)}$ is not a quotient of $M$.
Thus, either $M = \F_2$ or $M = (\F_{2})_Y^G$.  In the first case, we note as above that $\pi(H_0)$ cannot be $Y$, a contradiction. 
Hence $M = (\F_{2})_Y^G$, of dimension $k$.  Since $M$ is uniserial, it follows that
$H$ has a unique  minimal normal subgroup. 

Note that $[H:H_0]=2k$.   This implies that  there is a faithful irreducible $\CC H$-module  $W$ of dimension less than $2k$.
Now any $H$-orbit on the set of nontrivial linear characters of $M$ has size $1$, $k$ or at least $k(k-1)/2$. 
Since $W_M$ must have a linear 
constituent that is not fixed by $H$, by Clifford's theorem there are therefore precisely $k$ distinct linear
characters of $M$ occurring in $W$, and since $\dim W < 2k$, each occurs with multiplicity 1. Hence $\dim W = k$, and so 
$H$ embeds in $\GL_k(\CC)$.   Since $H$ is perfect  (it is perfect modulo the Frattini subgroup), in fact, 
$H$ embeds in $\SL_k(\CC)$. But the largest elementary abelian $2$-subgroup of $\SL_k(\CC)$ has rank $k-1$,
whereas $M \cong \F_2^k$, a contradiction.   This completes the proof. 
 \end{proof} 
 
 We can obtain a better lower bound in certain cases:
 
 \begin{theorem} \label{odd}    Let $k > 22$, and let $H$ be a nonsplit extension  
\[
1 \rightarrow M \rightarrow H \rightarrow A_k \rightarrow 1
\]
 such that $\gcd (2k, |M|)=1$.    
 Then $P(H) \ge \frac{1}{2}k(k-1)(k-2)$.
 \end{theorem}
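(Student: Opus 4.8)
The plan is to follow the proof of the preceding theorem closely, take a minimal counterexample $H$ with $P(H)<\tfrac12k(k-1)(k-2)$, and exploit the hypothesis $\gcd(2k,|M|)=1$ at two decisive points. Exactly as before, minimality gives $M\le\Phi(H)$, so $M$ is nilpotent, and there is a corefree subgroup $H_0$, maximal subject to not containing $M$, with $[H:H_0]<\tfrac12k(k-1)(k-2)$. Since $H_0$ is then maximal in $H_0M$ and $M$ is nilpotent, $M_0:=M\cap H_0$ is normal in $M$ of prime index, and $M/M_0$ is an irreducible $\F_p\,\pi(H_0)$-module for some prime $p$; as $M_0$ is corefree of prime index, $M$ is elementary abelian, and by Frobenius reciprocity $M$ embeds, as an $\F_pA_k$-module, into $(M/M_0)_{\pi(H_0)}^{A_k}$. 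Here $p$ is odd (since $|M|$ is), and, arguing as in the preceding proof (using $H^2(A_k,\F_p)=H^2(A_{k-1},\F_p)=0$ for $p$ odd together with $H^2(A_k,M)\neq 0$), we may assume $p\nmid k$.

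The next step is to identify $Y:=\pi(H_0)$. Since $[A_k:Y]$ divides $[H:H_0]<\tfrac12k(k-1)(k-2)=3\binom k3$, the group $Y$ is a proper subgroup of $A_k$ of index less than $3\binom k3$, so for $k>22$ the known bounds on transitive subgroups force $Y$ to be intransitive, stabilising a subset of size at most $3$; a short inspection (cf.\ \cite[1.1]{L}) then shows
\[
Y\ \in\ \{\,A_{k-1},\ A_{k-2},\ T_2,\ T_3,\ A_3\times A_{k-3}\,\},
\]
where $T_2=(S_2\times S_{k-2})\cap A_k\cong S_{k-2}$, $T_3=(S_3\times S_{k-3})\cap A_k$, and $A_3\times A_{k-3}$ is the unique subgroup of index $2$ in $T_3$; the indices in $A_k$ are $k$, $k(k-1)$, $\binom k2$, $\binom k3$, $2\binom k3$ respectively. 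Now $[H:H_0]=[A_k:Y]\cdot|M/M_0|$, and here the \emph{oddness} of $|M|$ enters: if $Y=T_3$ or $A_3\times A_{k-3}$ then $|M/M_0|<3$, hence $|M/M_0|=1$ (it is an odd prime power), i.e.\ $M\le H_0$, a contradiction. In the three surviving cases, comparing $[A_k:Y]\cdot|M/M_0|$ with $3\binom k3$ and using that a nontrivial irreducible module for $A_{k-1}$ or $A_{k-2}$, or an irreducible module of dimension greater than $1$ for $S_{k-2}$, has dimension at least $k-4$ (cf.\ \cite[5.3.7]{KL}), forces $M/M_0$ to be $1$-dimensional. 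Thus $M/M_0$ is the trivial module when $Y=A_{k-1}$ or $A_{k-2}$ (these being simple), and is either trivial or the sign module when $Y=T_2\cong S_{k-2}$.

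It remains to compute $H^2(A_k,M)$ and contradict the nonsplitness of $H$. By Mackey's theorem the four relevant induced modules are the restrictions to $A_k$ of: the permutation module $M^{(k-1,1)}=S^{(k)}\oplus S^{(k-1,1)}$ (using $p\nmid k$); the permutation module $M^{(k-2,1,1)}$ on ordered pairs of distinct points; the permutation module $M^{(k-2,2)}$ on $2$-subsets; and, in the sign case, $\operatorname{Ind}_{S_2\times S_{k-2}}^{S_k}(\operatorname{sgn}\otimes 1)$, which by Pieri's rule (in the form $e_2h_{k-2}=s_{(k-1,1)}+s_{(k-2,1,1)}$) has a Specht filtration with factors $S^{(k-1,1)}$ and $S^{(k-2,1,1)}$. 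In every case all composition factors of $M$ lie among the restrictions to $A_k$ of the irreducibles $D^\lambda$ with $\lambda\in\{(k),(k-1,1),(k-2,2),(k-2,1,1)\}$, and for each such $\lambda$ we have $H^2(A_k,D^\lambda)=0$ (as $p$ is odd and $k>22$) by \cite{KP}. An induction on composition length, via the long exact sequence in cohomology, then gives $H^2(A_k,M)=0$, the desired contradiction.

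The routine parts are the bookkeeping with Mackey's theorem and Pieri's rule and the verification that the structural reductions of the preceding proof carry over verbatim. The genuine obstacle — presumably the reason the hypothesis here is $k>22$ rather than $k>20$ — is the cohomological input $H^2(A_k,D^{(k-2,2)})=H^2(A_k,D^{(k-2,1,1)})=0$ for all odd primes $p$: the known vanishing for $D^{(k-1,1)}$ must be extended to these slightly larger partitions, with any sporadic small-$p$ exceptions ruled out directly (for instance by the \textsc{Magma} computations acknowledged above).
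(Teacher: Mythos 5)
Your proposal follows the paper's proof essentially step for step: a minimal counterexample, $M\le\Phi(H)$ nilpotent, a corefree $H_0$ of small index with $\pi(H_0)$ containing $X=A_{k-2}$, the deduction that $M$ is elementary abelian for an odd prime $r\nmid k$ and embeds in an induced module whose composition factors lie among $D^{(k)}$, $D^{(k-1,1)}$, $D^{(k-2,2)}$, $D^{(k-2,1,1)}$, and finally the vanishing of $H^2(A_k,-)$ on these factors; the paper shortcuts your case analysis of $Y$ by simply inducing the trivial module from $X=A_{k-2}\le\pi(H_0)$, which subsumes all your cases at once. The one input you rightly flag as the genuine obstacle --- $H^2(A_k,D^{(k-2,2)})=H^2(A_k,D^{(k-2,1,1)})=0$ for odd $p$ --- is exactly what the paper imports from \cite[Thm.~2]{P} together with \cite[Thm 4.1, Prop. 5.4]{BKM}; the reference \cite{KP} you lean on does not cover these partitions, so as written that step is a citation gap rather than a mathematical one.
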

 
 \begin{proof}     Suppose false, and let $H$ be a minimal counterexample. 
 We copy the previous proof. In particular, we deduce that $M\le \Phi(H)$, $M$ is nilpotent and $H$ has a corefree subgroup $H_0$ of index less than 
$\frac{1}{2}k(k-1)(k-2)$.  By \cite[1.1]{L},   $H_0M/M$ must contain $X:=A_{k-2}$.   As in the previous
 result,  $H_0$ must normalize a subgroup of prime index $r$ in $M$.   
Hence $M$ is an elementary abelian $r$-group for some odd prime $r$ with $r$ not dividing $k$.
   It follows that $M$ embeds in
$(\F_r)_X^{A_k}$.
In particular, the composition factors of $M$ are among  $D^{(k)}$, $D^{(k-1,1)}$, $D^{(k-2,2)}$  and $D^{(k-2,1,1)}$.
By  \cite[Thm. 2]{P} and
\cite[Thm 4.1, Prop. 5.4]{BKM}, it follows that $H^2(A_k, M)=0$, a contradiction. 
\end{proof}  

\noindent {\bf Remark } One can construct examples with $M$ a $3$-group such that $H$ has a faithful permutation representation of  degree $\frac{3}{2}k(k-1)(k-2)$.  The construction is very similar to those given in the next section.

\section{Proof of Theorem \ref{upper} -- the odd case}  \label{example1} 

In this section we prove Theorem \ref{upper}(i). 
Let $p$ be an odd prime and assume that $k \ge 10$ and $p$ divides $k$.      
Let $G=A_k$ and let $Y\cong S_{k-2}$ be a Young subgroup stabilizing a subset of size $2$. 

Let $S = S^{(k-2,1,1)}$ be the Specht module, and $D = D^{(k-2,1,1)}$ and $L = D^{(k-1,1)}$ 
be irreducible modules for $G$ over $\F_p$ (the restrictions to $G=A_k$ of the corresponding irreducibles for $S_k$, using the notation of \cite{J}).   We need the following relatively easy results about these modules.  The first follows from \cite[Theorem 2]{P}. 

\begin{lemma}  \label{wedge2}   
\begin{itemize}
\item[{\rm (i)}] $\dim S =\frac{1}{2} (k-1)(k-2)$, $\dim L = k-2$ and $\dim D = \frac{1}{2}(k-2)(k-3)$.

\item[{\rm (ii)}]  $S$ is indecomposable with socle isomorphic to $L$ and head isomorphic to $D$.
\end{itemize}
\end{lemma}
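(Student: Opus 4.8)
The plan is to establish Lemma \ref{wedge2} by identifying $S = S^{(k-2,1,1)}$ with a concrete combinatorial module and reading off both the dimensions and the submodule structure from the representation theory of $S_k$ in characteristic $p$ (with $p$ odd and $p \mid k$). For part (i), the dimension of the Specht module $S^{(k-2,1,1)}$ is given by the hook-length formula: the partition $(k-2,1,1)$ has hooks of the appropriate lengths so that $\dim S^{(k-2,1,1)} = \binom{k-1}{2} = \frac{1}{2}(k-1)(k-2)$, independently of the field. The value $\dim L = \dim D^{(k-1,1)} = k-2$ holds because $p \mid k$ (this is already used in the previous section, where $(\F_p)_Y^G$ was noted to have heart $D^{(k-1,1)}$ of dimension $k-2$). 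Finally $\dim D = \dim D^{(k-2,1,1)}$ is computed from $\dim D = \dim S^{(k-2,1,1)} - \dim(\text{submodule})$; since $S^{(k-2,1,1)}$ will be shown to have socle $L$ of dimension $k-2$ and head $D$ with no other composition factors, $\dim D = \frac{1}{2}(k-1)(k-2) - (k-2) = \frac{1}{2}(k-2)(k-3)$. All of this is covered by the cited \cite[Theorem 2]{P}, which describes exactly the modular structure of Specht modules for two-part-plus hooks / the relevant three-row partitions.

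For part (ii), I would argue as follows. The Specht module $S^{(k-2,1,1)}$ over $\F_p$ has a well-known quotient map onto $D^{(k-2,1,1)}$ (the head of the Specht module is always $D^\lambda$ when $\lambda$ is $p$-regular, and $(k-2,1,1)$ is $p$-regular for $p$ odd since no part is repeated $p$ or more times). So the head is $D = D^{(k-2,1,1)}$. The key point is that the kernel of $S \to D$ is irreducible and isomorphic to $L = D^{(k-1,1)}$. This is precisely the content of \cite[Theorem 2]{P}: for the partitions $(n-2,1,1)$ the only composition factors of the Specht module in characteristic $p$ dividing $n$ are $D^{(n-1,1)}$ and $D^{(n-2,1,1)}$, each with multiplicity one, and the Specht module is uniserial with socle $D^{(n-1,1)}$. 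Indecomposability is then automatic from having simple head (or simple socle).

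The main obstacle — really the only nontrivial step — is verifying that the submodule structure is as claimed rather than the Specht module being semisimple or having $L$ as a quotient instead of a submodule. This direction matters for the construction in \S\ref{example1}: one needs $L$ embedded in $S$, not split off. I would resolve this by invoking \cite[Theorem 2]{P} directly (the decomposition numbers and submodule lattice for these partitions are explicitly tabulated there), after checking that the hypothesis $p \mid k$ places us in the degenerate case where $D^{(k-2,1,1)}$ genuinely differs from $S^{(k-2,1,1)}$; when $p \nmid k$ the Specht module $S^{(k-2,1,1)}$ would be irreducible (equal to $D^{(k-2,1,1)}$) and the lemma would be false, so the divisibility hypothesis is essential and must be used. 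The restriction from $S_k$ to $A_k$ does not affect any of this because $(k-2,1,1)$ is not self-conjugate (its conjugate is $(3,1^{k-3})$), so the $S_k$-modules $S$, $D$, $L$ remain irreducible (resp. have the same submodule structure) on restriction to $A_k$.
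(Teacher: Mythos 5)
Your proposal is correct and follows essentially the same route as the paper, which disposes of this lemma simply by citing \cite[Theorem 2]{P} (your hook-length computation, the identification of head and socle, and the observation that $p\mid k$ is what makes $S^{(k-2,1,1)}$ reducible are exactly the content of Peel's theorem that the authors are invoking). The only small caveat is your aside on restriction to $A_k$: in characteristic $p$ the criterion for $D^{\lambda}$ to remain irreducible is that $\lambda$ is not fixed by the Mullineux map rather than not self-conjugate, though the conclusion is still correct here (e.g.\ by the dimension bound $k-3$ for nontrivial $A_{k}$-modules already used in the paper).
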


Let $\theta$ be the nontrivial 1-dimensional $\F_pY$-module.   

\begin{lemma}  \label{wedge2-cohomology}
\begin{itemize}
\item[{\rm (i)}] The induced module $\theta_Y^G$ has socle and head isomorphic to $L$, and the maximal submodule modulo the socle is isomorphic to $\F_p \oplus D$.
\item[{\rm (ii)}]  Let $M$ be the submodule of $\theta_Y^G$ with composition factors
$D$ and $L$.   Then
\begin{itemize}
\item[{\rm (a)}] $H^2(G,M) \ne 0$, and
\item[{\rm (b)}] the restriction  $M_Y \cong \theta  \oplus M_0$ for some $\F_pY$-module $M_0$.
\end{itemize}
\end{itemize}
\end{lemma}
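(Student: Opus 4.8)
The plan is to work throughout with the identification $\theta_Y^G\cong\wedge^2 V$, where $V=M^{(k-1,1)}=\F_p^k$ is the natural permutation module for $G=A_k$ (restricted from $S_k$) and $\theta$ is the sign character of $Y\cong S_{k-2}$. This holds because $S_k$ is transitive on $2$-subsets with point stabiliser $S_2\times S_{k-2}$, so $\wedge^2 M^{(k-1,1)}=\mathrm{Ind}_{S_2\times S_{k-2}}^{S_k}(\varepsilon)$ with $\varepsilon$ equal to $\mathrm{sgn}$ on $S_2$ and trivial on $S_{k-2}$, and restricting to $A_k$ by Mackey (one double coset, since $S_2\times S_{k-2}$ contains odd permutations) gives $\theta_Y^G$ as $\varepsilon|_Y=\theta$. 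First I would record the composition factors of $\theta_Y^G$: since $M^{(k-1,1)}\otimes\CC\cong\CC\oplus S^{(k-1,1)}_{\CC}$ and $\wedge^2 S^{(k-1,1)}_{\CC}\cong S^{(k-2,1,1)}_{\CC}$, reducing mod $p$ gives $[\wedge^2 M^{(k-1,1)}]=[S^{(k-1,1)}]+[S^{(k-2,1,1)}]$ in the Grothendieck group, which by Lemma \ref{wedge2} and the factors $\F_p,L$ of $S^{(k-1,1)}$ (as $p\mid k$) equals $[\F_p]+2[L]+[D]$. I would also note that $\theta_Y^G$ is self-dual, as $\theta$ is self-dual and induction commutes with duality.

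For (i), the restriction I need is $V|_Y\cong T\oplus\theta\oplus L'$, where $T$ is trivial of dimension $2$ and $L'=D^{(k-3,1)}$ for $S_{k-2}$: taking the stabilised $2$-set to be $\{1,2\}$, $V|_Y=\langle e_1+e_2\rangle\oplus\langle e_1-e_2\rangle\oplus\langle e_j:j\ge 3\rangle$, with $\langle e_1-e_2\rangle\cong\theta$ and $\langle e_j:j\ge 3\rangle=M^{(k-3,1)}=\F_p\oplus L'$ (using $p\nmid k-2$, as $p$ is odd and $p\mid k$). Since $\mathbf 1\in S^{(k-1,1)}$ (because $p\mid k$) spans the sum-zero subspace of $T$, restriction gives $S^{(k-1,1)}|_Y=\langle\mathbf 1\rangle\oplus\theta\oplus L'$ and hence $L|_Y=S^{(k-1,1)}|_Y/\langle\mathbf 1\rangle\cong\theta\oplus L'$, with $\theta$ of multiplicity one. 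Frobenius reciprocity then gives $\dim\mathrm{Hom}_G(L,\theta_Y^G)=\dim\mathrm{Hom}_Y(L|_Y,\theta)=1$ and $\mathrm{Hom}_G(\F_p,\theta_Y^G)=\mathrm{Hom}_Y(\F_p,\theta)=0$, so $\mathrm{soc}(\theta_Y^G)$ contains $L$ (multiplicity one) but not $\F_p$, and by self-duality so does the head. If $D$ also lay in the socle then, being self-dual of total multiplicity one, it would be a direct summand of $\theta_Y^G$; proving this cannot happen --- equivalently, that $\theta_Y^G$ is indecomposable --- is the step I expect to be the main obstacle. I would do this either by a branching computation showing $\theta$ does not occur as a quotient of $D^{(k-2,1,1)}|_Y$ (so $\mathrm{Hom}_G(D,\theta_Y^G)=0$), extracted from the structure results of \cite{P}, or by showing $\mathrm{End}_G(\theta_Y^G)$ is local: a Mackey computation (the $Y$-orbits on $2$-subsets being $\{1,2\}$, those meeting it in a point, and those disjoint from it) gives $\dim\mathrm{End}_G(\theta_Y^G)=1+1+0=2$, and checking the non-identity generator is nilpotent (its two eigenvalues on the $\CC$-constituents coincide mod $p$ when $p\mid k$). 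Granting this, $\mathrm{soc}(\theta_Y^G)=L=\mathrm{head}(\theta_Y^G)$, each of multiplicity one, so $\mathrm{rad}(\theta_Y^G)/\mathrm{soc}(\theta_Y^G)$ has composition factors $\F_p$ and $D$, and it is semisimple --- i.e. $\cong\F_p\oplus D$ --- since $\mathrm{Ext}^1_G(\F_p,D)\cong\mathrm{Ext}^1_G(D,\F_p)\cong H^1(A_k,D^{(k-2,1,1)})=0$ by \cite{BKM}.

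For (ii)(a), I would use Shapiro's lemma: $H^i(G,\theta_Y^G)\cong H^i(Y,\theta)\cong H^i(S_{k-2},\theta)$, and since $[S_{k-2}:A_{k-2}]=2$ is invertible mod $p$ this embeds in $H^i(A_{k-2},\F_p)$, which is $0$ for $i=1$ ($A_{k-2}$ perfect) and for $i=2$ (the Schur multiplier of $A_{k-2}$ is $C_2$, since $k-2\ge 8$; here $k\ge 10$ is used). So $H^1(G,\theta_Y^G)=H^2(G,\theta_Y^G)=0$, and the long exact sequence of $0\to M\to\theta_Y^G\to Q\to 0$, with $Q=\theta_Y^G/M$, gives $H^2(G,M)\cong H^1(G,Q)$. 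By (i), $Q$ has composition factors $\F_p$ and $L$ with head $L$, hence is an extension $0\to\F_p\to Q\to L\to 0$; since $H^1(G,\F_p)=H^2(G,\F_p)=0$, this gives $H^1(G,Q)\cong H^1(G,L)$. Finally $H^1(G,L)=H^1(A_k,D^{(k-1,1)})\ne 0$: because $p\mid k$ the $\F_p S_k$-module $S^{(k-1,1)}$ is a nonsplit extension of $D^{(k-1,1)}$ by $\F_p$ (its head is the simple $D^{(k-1,1)}$), hence indecomposable, and it remains indecomposable over $A_k$ (a direct-summand decomposition over $A_k$ would average, using $p$ odd, to one over $S_k$); so $\mathrm{Ext}^1_{A_k}(D^{(k-1,1)},\F_p)\ne 0$, and $D^{(k-1,1)}$ is self-dual. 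Hence $H^2(G,M)\ne 0$.

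For (ii)(b): since $M\ne 0$ it contains the simple socle $\mathrm{soc}(\theta_Y^G)=L$, which inside $\wedge^2 V$ equals $\{\mathbf 1\wedge v:v\in S^{(k-1,1)}\}$; in particular $w:=\mathbf 1\wedge(e_1-e_2)$ is a nonzero element of $M$ spanning a $Y$-submodule $\cong\theta$. Restricting the identification $(\theta_Y^G)|_Y=\wedge^2(V|_Y)=\wedge^2(T\oplus\theta\oplus L')$, the summands $\wedge^2 T\,(\cong\F_p)$, $T\otimes\theta\,(\cong\theta^{\oplus 2})$, $T\otimes L'$, $\theta\otimes L'$, and $\wedge^2 L'$ (with $\wedge^2\theta=0$) are $Y$-direct-summands, and all but $\wedge^2 L'$ are semisimple; let $X_{\mathrm{ss}}$ be their sum, so $w\in T\otimes\theta\subseteq X_{\mathrm{ss}}$ as $\mathbf 1\in T$ and $e_1-e_2\in\theta$. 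Composing the inclusion $M|_Y\hookrightarrow(\theta_Y^G)|_Y$ with the projections onto $X_{\mathrm{ss}}$, then onto $T\otimes\theta$, then onto a copy of $\theta$ not vanishing on $w$, yields a $Y$-homomorphism $\psi\colon M|_Y\to\theta$ with $\psi(w)\ne 0$; since $\langle w\rangle\cong\theta$, $\psi$ restricts to an isomorphism there and $M|_Y=\langle w\rangle\oplus\ker\psi\cong\theta\oplus M_0$. Thus, once part (i) is established, parts (ii)(a) and (ii)(b) are essentially formal.
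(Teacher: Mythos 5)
Your overall strategy coincides with the paper's: composition factors of $V=\theta_Y^G$ via the Littlewood--Richardson rule and Lemma \ref{wedge2}, the socle of $V$ via Frobenius reciprocity and branching to $Y$, self-duality for the head, and for (ii)(a) the vanishing of $H^1(G,V)$ and $H^2(G,V)$ by Eckmann--Shapiro followed by the long exact sequence reducing $H^2(G,M)$ to $H^1(G,V/M)\cong H^1(G,S^{(k-1,1)})\neq 0$; your (ii)(b) is an explicit coordinate version of the paper's multiplicity-one splitting argument. The one step you leave open --- showing $D$ does not occur in $\mathrm{soc}(V)$, which you flag as ``the main obstacle'' --- is exactly where the paper uses your ``option (a)'', and it is immediate from data you have already computed: since $D\cong\wedge^2 L$ and $L_Y\cong\theta\oplus L_0$ with $L_0$ irreducible, one gets $D_Y\cong(\theta\otimes L_0)\oplus\wedge^2 L_0$ with both summands irreducible of dimension greater than $1$ (these are precisely the summands $\theta\otimes L'$ and $\wedge^2 L'$ appearing in your own decomposition in (ii)(b)), so $\mathrm{Hom}_G(D,V)\cong\mathrm{Hom}_Y(D_Y,\theta)=0$ and no appeal to indecomposability of $V$ is needed. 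Your ``option (b)'' would also work: the non-identity basis element of $\mathrm{End}_G(\wedge^2 V)$, namely $x\wedge y\mapsto x\wedge f+f\wedge y$ with $f$ the all-ones vector, has eigenvalues $k$ and $0$ on the two characteristic-zero constituents, and these coincide mod $p$ precisely because $p\mid k$; but this is more work than the branching computation. The only other divergence is minor: for semisimplicity of $\mathrm{rad}(V)/\mathrm{soc}(V)$ you invoke $\mathrm{Ext}^1_G(\F_p,D)=0$, whereas the paper gets this for free from self-duality --- the heart of a self-dual module whose simple socle equals its simple head is itself self-dual, and a uniserial module with socle $\F_p$ and head $D$ (or vice versa) is not; this spares you a first-cohomology citation that \cite{BKM} may not directly supply.
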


\begin{proof} (i)  In characteristic $0$,  the Littlewood-Richardson rule shows that
$\theta_Y^G$ has composition factors  $S^{(k-2,1,1)}$ and $S^{(k-1,1)}$.
Hence using Lemma \ref{wedge2}(ii), we see that in characteristic $p$, the composition factors are $D$, $\F_p$ and $L$ (twice).

Let $V$ be the $\F_pG$-module $\t_Y^G$. We now compute the socle of $V$. By Frobenius Reciprocity, 
${\rm Hom}(\F_p,V) = 0$. Also $L_Y = \t \oplus L_0$  with $L_0$ irreducible, so ${\rm Hom}(L,V) \cong {\rm Hom}(L_Y,\t)$ is 1-dimensional.
Finally, $D \cong \wedge^2 L$, so $D_Y \cong (\t\otimes L_0) \oplus \wedge^2L_0$, and both summands are irreducible.
Hence ${\rm Hom}(D,V) \cong {\rm Hom}(D_Y, \t) = 0$. It follows that ${\rm soc}(V) \cong L$. Now the conclusion of (i) follows from the fact that $V$ is self-dual.

(ii) By the previous paragraph, $\t$ is the only 1-dimensional composition factor of $M_Y$, and it appears in the socle. On the other hand, $M \subseteq V = \t_Y^G$, so by Frobenius reciprocity $M_Y$ surjects onto $\theta$. Hence  $M_Y= \theta \oplus M_0$ as claimed.

It remains to show that $H^2(G,M) \ne 0$.    
First note that for $i=1,2$ we have 
\begin{equation}\label{coho}
H^i(G, V) = H^i(G,\theta_Y^G)=
H^i(Y, \theta)= 0
\end{equation}
 (here we are using the assumptions that $p$ is not 2 and $k > 9$).
Consider   
\[
0 \rightarrow M \rightarrow V \rightarrow  V/M \rightarrow 0.
\]
Then (\ref{coho}), together with the long exact sequence in cohomology, gives \linebreak
$H^2(G,M) \cong H^1(G,V/M)$. Since $V/M$ is isomorphic to the codimension 1 submodule of the $k$-dimensional permutation module 
for $G$ which is uniserial, $H^1(G,V/M) \ne 0$. This completes the proof. 
 \end{proof}
 
\noindent {\bf Proof of Theorem \ref{upper}(i) }
 Let $M$ be as in the previous lemma, and consider the group $H$ defined by a nonsplit extension as follows:
$$
1  \rightarrow M \rightarrow H \rightarrow A_k \rightarrow 1.
$$
(such a nonsplit extension exists, by Lemma \ref{wedge2-cohomology}(ii)(a)).
Since $M$ is uniserial and $H^2(G,L)=0$, $M$ is contained in the Frattini subgroup of $H$.  
Let $M_0$ the $Y$-invariant hyperplane of $M$, as in Lemma \ref{wedge2-cohomology}(ii)(b), and let $E = N_G(M_0)$.  
Then $E/M = Y$.   This gives
rise to the sequence
\[
1 \rightarrow M/M_0  \rightarrow  E/M_0 \rightarrow Y.
\]
As observed in (\ref{coho}), $H^2(Y, M/M_0) = H^2(Y,\t) = 0$, and so the above sequence splits.    
Thus $E$ contains a subgroup $E_0$ of index $p$. The action of $H$ on the cosets of $E_0$ 
maps $H$ into the symmetric group of degree $\frac{1}{2}pk(k-1)$.    
Since the core of $E_0$ is trivial,
this is an embedding of $H$.  This proves Theorem \ref{upper}(i).

  \section{Proof of Theorem \ref{upper} -- the even case}  \label{example2}  
  
  In this section we prove Theorem \ref{upper}(ii), the case where $p=2$.  In this case, unlike part (i) of the theorem, there is no restriction on residue of $k$ modulo $p$.
.    
 Let $k\ge 10$, let $G=A_k$ acting on $\Omega=\{1, \ldots, k\}$, and let $Y \cong S_{k-2}$ be
 the stabilizer  in $G$ of the subset $\{1, 2\}$. Write $F=\F_2$, and define $P = (F)_Y^G$, the permutation module over $F$ of $G$ acting on the set of pairs in $\O$. Define the fixed point space $C_P(Y) = \{v\in P: vy=v \ \forall y \in Y\}$.

 \begin{lemma}  \label{init}
 \begin{itemize}
 \item[{\rm (i)}]  The fixed point space $C_P(Y)$ has dimension $3$.
 \item[{\rm (ii)}]  $\dim H^2(G, P) = 2$.
 \end{itemize}
 \end{lemma}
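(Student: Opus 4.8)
The plan is to handle the two statements separately. For part (i), I would use that $P$ is a permutation $\F_2G$-module with basis $\{e_\sigma\}$ indexed by the $2$-element subsets $\sigma$ of $\Omega$. For any subgroup, the fixed space of a permutation module is spanned---over any field---by the orbit sums $\sum_{\sigma\in\mathcal{O}}e_\sigma$, one for each orbit $\mathcal{O}$, and these are linearly independent because their supports are pairwise disjoint. Hence $\dim C_P(Y)$ equals the number of $Y$-orbits on $2$-subsets of $\Omega$. Next I would describe $Y$ explicitly: as the setwise stabiliser in $A_k$ of $\{1,2\}$, it consists of the even permutations that either fix both of $1,2$ or interchange them; in particular it contains a permutation swapping $1$ and $2$, and it induces the full symmetric group on $\{3,\dots,k\}$. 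Its orbits on $2$-subsets are therefore $\{\{1,2\}\}$, the set $\{\{1,j\},\{2,j\}:3\le j\le k\}$, and the set $\{\{i,j\}:3\le i<j\le k\}$---exactly three of them---giving $\dim C_P(Y)=3$.

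For part (ii), I would invoke the Eckmann--Shapiro lemma: since $P=(F)_Y^G$ is induced from the trivial $\F_2Y$-module and induction coincides with coinduction for finite groups, $H^n(G,P)\cong H^n(Y,\F_2)$ for every $n$, so in particular $H^2(G,P)\cong H^2(S_{k-2},\F_2)$. It then remains to recall that $\dim_{\F_2}H^2(S_n,\F_2)=2$ whenever $n\ge 4$. This follows from the universal coefficient theorem together with the standard facts that $S_n^{\mathrm{ab}}\cong\ZZ/2$ and that the Schur multiplier satisfies $H_2(S_n,\ZZ)\cong\ZZ/2$ for $n\ge 4$: indeed $H^2(S_n,\F_2)\cong\mathrm{Hom}(\ZZ/2,\F_2)\oplus\mathrm{Ext}^1_{\ZZ}(\ZZ/2,\F_2)\cong\F_2^2$. (One could instead cite the mod $2$ cohomology of symmetric groups from the literature, e.g. \cite{KP}.) Since $k\ge 10$ we have $k-2\ge 4$, so $\dim H^2(G,P)=2$.

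Neither step is technically deep; the points that need care are that $Y$ genuinely fuses $\{1,j\}$ with $\{2,j\}$---that is, that $Y$ contains an element interchanging $1$ and $2$---which is what makes the orbit count $3$ rather than $4$, and that the universal-coefficient computation of $H^2(S_{k-2},\F_2)$ is applied in the range (all $k\ge 6$) where the Schur multiplier of $S_{k-2}$ is exactly $\ZZ/2$. The main obstacle is really just assembling these standard facts correctly rather than any substantive difficulty; the role of this lemma is to set up the module $P$ and the obstruction group $H^2(G,P)$ for the construction in the rest of the section.
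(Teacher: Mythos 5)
Your proposal is correct and follows essentially the same route as the paper: part (i) is the statement that the rank of the action of $G$ on pairs is $3$ (i.e.\ $Y$ has three orbits on $2$-subsets), and part (ii) is the Eckmann--Shapiro identification $H^2(G,P)\cong H^2(Y,\F_2)$ together with the known $2$-dimensionality of $H^2(S_{k-2},\F_2)$. The only difference is that you supply the justifications (orbit-sum basis of the fixed space, universal coefficients with $S_n^{\mathrm{ab}}\cong\ZZ/2$ and Schur multiplier $\ZZ/2$) that the paper leaves implicit or cites.
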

 
 \begin{proof}  The first statement holds since the action of $G$ on pairs  has rank $3$.   The second follows by 
 the Eckmann--Shapiro Lemma \cite[Theorem 4]{E}: 
 $\dim H^2(G,P) = \dim H^2(G,(F)_Y^G ) = \dim H^2(Y, F) = 2$.
 \end{proof}

Let $e_{ij}$ denote a basis element of $P$ corresponding to the subset $\{i,j\}$ of $\O$, where $i < j$.   Note
 that this is an orthonormal basis with respect to the standard inner product   $(\,,\,)$ on $P$ (which is preserved by $G$).     
Now define the following elements of $P$: 
\[
\begin{array}{l}
x_i = \sum_j   e_{ij} \;\;(1\le i\le k) \\
f = \sum_{i,j}  e_{ij} \\
u = \sum_{2<i<j} e_{ij}.
\end{array}
\]
Note that a basis for the fixed point space $C_P(Y)$ is $\{u,f,y\}$, where $y=x_1 + x_2$. Since $P$ is self dual,
it follows also that $P/[Y,P]$ is $3$-dimensional, i.e. there is a $3$-dimensional trivial quotient of $P$ as an $FY$-module.

Assume now that $k \equiv 3 \pmod 4$. 
Then $P  = P_1 \oplus P_2 \oplus P_3  \cong F \oplus S^{(k-1,1)} \oplus S^{(k-2,2)}$ with each summand irreducible (where we identify each $S^{(k-i,i)}$ with its reduction modulo 2) . We can identify  
$P_1 = Ff$,  $P_2=Fx_1 + \ldots + Fx_k$ (of dimension $k-1$) and $P_3 =  (P_1 + P_2)^{\perp}$.     By Frobenius reciprocity,  $C_{P_i}(Y)$ has dimension 1 for each $i$.

Now $u$ is orthogonal to $P_1 + P_2$, since $(u,u)=0$ (as $k \equiv 3 \pmod 4$), whence
$(u,f)=0$  and $(u,x_i)= k -3 =0$ for $i > 2$ and $(u, x_i)=0$ for $i=1,2$.  Let $P_0 =(Fu)^{\perp}$.  Then $P_0$ is  a  $Y$-invariant hyperplane containing $P_1 + P_2$.  

Since $H^2(A_s, F)$ is $1$-dimensional for $s \ge 4$, it follows by Frobenius reciprocity that $\dim H^2(G, P_1)=1$. Also $H^2(G,P_2)=0$ by \cite{KP}, and so $\dim H^2(G,P_3)=1$ by Lemma \ref{init}(ii).   

Define the following two elements in $G$: 
\[
g_1 = (1 \  2)(3 \  4), \;g_2=(3 \  4)(5 \  6),
\]
so that $g_1 \in Y \cong S_{k-2}$ and $g_2 \in X$, where $X:=G_{12} \cong A_{k-2}$. 

We now consider $2$-cocycles.  We assume that all $2$-cocycles  $\delta$ are normalized so that $\delta(1,h)=\delta(h,1)=1$ for all $h$.

By the Eckmann-Shapiro Lemma, as we noted, we have an isomorphism from 
$H^2(Y, F)$ to $H^2(G,P)$ and these have dimension 2.  
For each of the four elements in $H^2(Y,F)$, we can choose a 2-cocycle $\e$ representing that element, and  
$\epsilon$ is completely determined (up to a coboundary) by $\epsilon(g_1, g_1)$ and $\epsilon(g_2,g_2)$.  

Let $\delta$ be a $2$-cocyle  representing  an element of $H^2(G, P)$ which is nontrivial in $H^2(G,P_3)$.  
Let $\epsilon \in H^2(Y, F)$ correspond to $\delta$ via the isomorphism given by the 
Eckmann-Shapiro Lemma.   Choose coset representatives $g_{ij}$ for $Y$ in $G$ as follows:
\[
\begin{array}{l}
g_{ij} = (1 \  i) (2 \  j),  \hbox{ if }2 < i < j,  \\
g_{1j} = (2 \ 1 \ j) \hbox{ if } j > 2, \\
g_{2j} = (1 \  2  \  j) \hbox{ if } j > 2, \\
g_{12} =1.
\end{array}
\]
Note that $g_{ij}$ sends $\{1,2\} \rightarrow \{i,j\}$ for all $i,j$.

We need some information about $\delta(g_i, g_i)$ for $i = 1, 2$. 

\begin{lemma}   \label{cocycle} 
\begin{itemize}  
\item[{\rm (i)}]    $\delta(g_1,g_1)$ is not contained in $P_0$.
\item[{\rm (ii)}]    $\delta(g_2, g_2)$ is contained in $P_0$.
\end{itemize}
\end{lemma}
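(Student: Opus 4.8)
The plan is to reduce both statements to one cocycle computation. Since $u\in C_P(Y)$, the functional $v\mapsto (v,u)$ is an $FY$-homomorphism $P\to F$ with kernel exactly $P_0$, so $\delta(g_i,g_i)\in P_0$ if and only if $(\delta(g_i,g_i),u)=0$. I would first check this is well posed and insensitive to most choices: the coboundary ambiguity in $\delta(g_i,g_i)$ is $\{v+v\cdot g_i:v\in P\}$, which is annihilated by $(\,\cdot\,,u)$ because $g_i\in Y$ fixes $u$; and $(\,\cdot\,,u)$ vanishes on $P_1$ (as $(f,u)=\binom{k-2}{2}\equiv 0\pmod 2$ since $k\equiv 3\pmod 4$) and on $P_2$ (as $u\perp P_2$), both already observed. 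Hence $(\delta(g_i,g_i),u)$ depends only on the component of $[\delta]$ in $H^2(G,P_3)\cong F$, and it suffices to evaluate it on one convenient cocycle representative.

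For that I would use the explicit form of the Eckmann--Shapiro isomorphism attached to the coset representatives $g_{ij}$: there is a representative cocycle $\delta(g,h)=\sum_{ij}\epsilon\bigl(\rho_{ij}(g),\rho_{(ij)\cdot g}(h)\bigr)e_{ij}$, where $g_{ij}g=\rho_{ij}(g)\,g_{(ij)\cdot g}$ defines $\rho_{ij}(g)\in Y$ and the pair $(ij)\cdot g$ (so that for $g,h\in Y$ the $e_{12}$-coefficient is $\epsilon(g,h)$, as required). Since $u=\sum_{2<i<j}e_{ij}$ with the $e_{ij}$ orthonormal, $(\delta(g_m,g_m),u)=\sum_{2<i<j}\epsilon(\rho_{ij}(g_m),\rho_{(ij)\cdot g_m}(g_m))$, which I would evaluate by grouping pairs $\{i,j\}$ into $\langle g_m\rangle$-orbits. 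A two-element orbit $\{A,B\}$ contributes $\epsilon(h,h^{-1})+\epsilon(h^{-1},h)$ with $h=\rho_A(g_m)$ (using $\rho_A(g_m)\rho_B(g_m)=\rho_A(g_m^2)=1$), and this is $0$ in $F$ since $\epsilon(h,h^{-1})=\epsilon(h^{-1},h)$ by the $2$-cocycle identity. A fixed pair $\{i,j\}$ contributes $\epsilon(h,h)$ with $h=g_{ij}g_mg_{ij}^{-1}$, a product of two disjoint transpositions, hence $Y$-conjugate to $g_1$ if it moves $\{1,2\}$ and to $g_2$ if it fixes $\{1,2\}$ pointwise, so the contribution is $\epsilon(g_1,g_1)$ or $\epsilon(g_2,g_2)$ accordingly (these being invariants of the relevant cyclic subgroup, hence constant on conjugates). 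A short inspection of the explicit $g_{ij}$ then gives: for $g_1$ the fixed pairs are $\{3,4\}$, with $g_{34}g_1g_{34}^{-1}=g_1$, and the $\binom{k-4}{2}$ pairs $\{i,j\}$ with $i,j\ge 5$, with $g_{ij}g_1g_{ij}^{-1}=(i\,j)(3\,4)$ (fixing $\{1,2\}$); for $g_2$ the fixed pairs are $\{3,4\},\{5,6\}$, with conjugates $(1\,2)(5\,6)$ and $(1\,2)(3\,4)$, and the $\binom{k-6}{2}$ pairs $\{i,j\}$ with $i,j\ge 7$, with conjugate $g_2$. Thus $(\delta(g_1,g_1),u)=\epsilon(g_1,g_1)+\binom{k-4}{2}\epsilon(g_2,g_2)$ and $(\delta(g_2,g_2),u)=2\epsilon(g_1,g_1)+\binom{k-6}{2}\epsilon(g_2,g_2)$. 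As $k\equiv 3\pmod 4$ makes $\binom{k-4}{2}$ odd and $\binom{k-6}{2}$ even, these reduce to $\epsilon(g_1,g_1)+\epsilon(g_2,g_2)$ and $0$.

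It remains to show $\epsilon(g_1,g_1)+\epsilon(g_2,g_2)=1$. I would note that the Eckmann--Shapiro isomorphism restricts on $H^2(G,P_1)=H^2(G,F)$ to $\operatorname{res}^{A_k}_{Y}\colon H^2(A_k,F)\to H^2(Y,F)$ (the projection $P\to P_1=Ff\cong F$ agreeing with the one defining the isomorphism, since $(f,e_{12})=1$). The nonzero class of $H^2(A_k,F)$ is that of $2\cdot A_k$, and a product of two disjoint transpositions lifts to an element of order $4$ in $2\cdot A_k$, so this class restricts nontrivially to both $\langle g_1\rangle$ and $\langle g_2\rangle$; it therefore corresponds to the cocycle with $\epsilon(g_1,g_1)=\epsilon(g_2,g_2)=1$. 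Since $P=P_1\oplus P_2\oplus P_3$ with $H^2(G,P_2)=0$, we have $H^2(G,P)=H^2(G,P_1)\oplus H^2(G,P_3)$, and Eckmann--Shapiro carries this onto $H^2(Y,F)=F^2$; hence the image of $H^2(G,P_3)$ is a line complementary to the span of $(1,1)$, so any class with nonzero $H^2(G,P_3)$-component has $(\epsilon(g_1,g_1),\epsilon(g_2,g_2))\in\{(1,0),(0,1)\}$. In particular $\epsilon(g_1,g_1)+\epsilon(g_2,g_2)=1$, giving $(\delta(g_1,g_1),u)=1$ and $(\delta(g_2,g_2),u)=0$, which is exactly (i) and (ii).

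I expect the main obstacle to be the bookkeeping around the explicit Eckmann--Shapiro cocycle: fixing the left/right module conventions for $P$ and the functions $\rho_{ij}$ consistently, and then carefully classifying, for each of $g_1$ and $g_2$, which pairs $\{i,j\}$ with $2<i<j$ are fixed and to which of $g_1,g_2$ the conjugate $g_{ij}g_mg_{ij}^{-1}$ is $Y$-conjugate. The only inputs needed beyond the paper's earlier computations are the parities of $\binom{k-4}{2}$ and $\binom{k-6}{2}$ for $k\equiv 3\pmod 4$, and the standard fact that a product of two disjoint transpositions lifts to an element of order $4$ in $2\cdot A_k$.
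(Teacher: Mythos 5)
Your proof is correct, and it takes a genuinely different route from the paper's. The paper proves Lemma \ref{cocycle} by induction on $k$ (in steps of $4$), using the locality of the Eckmann--Shapiro cocycle to reduce the computation of $(\delta(g_s,g_s),u)$ to the subgroups $G(\O_i)\cong A_{k-4}$ via the decomposition $u=\sum_0^6 u(\O_i)$, with the base cases $k=7,11$ verified by Magma. You instead evaluate $(\delta(g_m,g_m),u)$ in closed form directly from the explicit induced cocycle $\delta(g,h)=\sum_{ij}\epsilon(\rho_{ij}(g),\rho_{(ij)\cdot g}(h))e_{ij}$: the $\langle g_m\rangle$-orbits of size $2$ on the support of $u$ cancel in pairs because $\epsilon(h,h^{-1})=\epsilon(h^{-1},h)$, the fixed pairs contribute the conjugacy invariants $\epsilon(g_1,g_1)$ or $\epsilon(g_2,g_2)$, and the parities of $\binom{k-4}{2}$ and $\binom{k-6}{2}$ for $k\equiv 3\pmod 4$ give $(\delta(g_1,g_1),u)=\epsilon(g_1,g_1)+\epsilon(g_2,g_2)$ and $(\delta(g_2,g_2),u)=0$. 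Your identification of $\epsilon(g_1,g_1)+\epsilon(g_2,g_2)=1$ is also sound: the Eckmann--Shapiro isomorphism carries $H^2(G,P_1)$ to the image of $\operatorname{res}^G_Y$ on $H^2(G,F)$, which is spanned by $(1,1)$ in the coordinates $(\epsilon(g_1,g_1),\epsilon(g_2,g_2))$ since a product of two disjoint transpositions lifts to an element of order $4$ in the double cover; hence any class nontrivial in $H^2(G,P_3)$ lands on $(1,0)$ or $(0,1)$. The checks of well-definedness (coboundary values $v+v\cdot g_i$ are killed by $(\,\cdot\,,u)$ since $g_i$ fixes $u$) are the right ones. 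What each approach buys: the paper's induction sidesteps all convention-sensitive bookkeeping with the explicit induced cocycle at the cost of machine-checked base cases; your argument is computer-free and uniform in $k$, but its correctness genuinely hinges on the careful orbit/conjugacy classification and on fixing the Eckmann--Shapiro conventions consistently --- fortunately your final answer is robust to the usual left/right ambiguities, since only conjugacy classes and the involution-invariance of the support of $u$ enter.
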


\begin{proof}   We  proceed by induction on $k$ (assuming as above that $k \equiv 3 \pmod 4$). 
If $k = 7$ or $11$, the conclusion follows by direct computation using Magma \cite{M}.   So assume that $k \ge 15$.

Let  $\Omega_0 = \{1,\ldots ,k-4\}$, and let $G(\Omega_0) \cong A_{k-4}$ be the subgroup
of $G$ acting trivially on the complement of $\Omega_0$.  Note that  the permutation
module $P(\Omega_0)$  for $G(\O_0)$ acting on pairs in $\Omega_0$ is a $G(\Omega_0)$-summand of $P$.

Let $m_s = \delta(g_s,g_s)$ for $s=1,2$ and write $m_s = \sum  \alpha_{ij} e_{ij}$.  By the isomorphism given
in the Eckmann-Shapiro Lemma 
(cf.  \cite[p. 488]{E} or \cite[p. 43]{B}),  the determination of $\alpha_{ij}$ depends only on $\epsilon(g_s, g_s)$ and the coset representatives
$g_{ij}$ given above that take $\{1, 2 \}$ to $\{i, j \}$.   Note that if $i, j \in \Omega_0$, then $g_{ij}$ 
is in $G(\O_0$).      Thus, in computing $\alpha_{ij}$ for $i, j \in \Omega_0$, we can work in $G(\O_0)$. 
It follows that 
the projection of $\delta(g_s,g_s)$ in $P(\Omega_0)$ is precisely $\delta'(g_s,g_s)$, where $\delta'$
is the 2-cocycle corresponding to $\delta$, viewed as a function on  $G(\Omega_0) \times G(\Omega_0)$ with values
in $P(\Omega_0)$  (i.e. $\delta'$ corresponds to $\epsilon$  in the isomorphism given by the Eckmann-Shapiro Lemma
for the smaller group).  

Now define a collection of subsets of $\O$, as follows. Write $D = \{1,2, \ldots ,k-8\}$, and let 
\[
\begin{array}{l}
\O_0= D \cup \{k-7,k-6,k-5,k-4\}, \\
\O_1= D \cup \{k-3,k-2,k-1,k\}, \\
\O_2= D \cup \{k-7,k-6,k-3,k-2\}, \\
\O_3= D \cup \{k-5,k-4,k-3,k-2\}, \\
\O_4= D \cup \{k-7,k-6,k-1,k\}, \\
\O_5= D \cup \{k-5,k-4,k-1,k\}, \\
\O_6 = D.
\end{array}
\]
Then $u = \sum_0^6 u(\O_i)$, where $u(\O_i) = \sum_{2<r,s \in \O_i}e_{rs}$.

Let $m_j = \delta(g_j,g_j)$ for $j=1,2$. By induction we have $(m_1, u(\O_i)) = 1$, $(m_2, u(\O_i)) = 0$ for all $i$, and hence $(m_1,u)=1$ and $(m_2,u) = 0$. Both conclusions of the lemma follow. 
\end{proof}  

\begin{cor}\label{k3mod4}
Let $k\ge 7$ be an integer such that  $k \equiv 3 \pmod 4$, and let $M$ be the $\F_2A_k$-module that is the reduction modulo $2$ of the Specht module $S^{(k-2,2)}$.  Let $H$ be a nonsplit extension 
\[
1 \rightarrow M \rightarrow H \rightarrow A_k \rightarrow 1.
\]
Then $H$ has a faithful transitive permutation representation of degree $2k(k-1)$.
\end{cor}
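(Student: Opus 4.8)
The plan is to use the cohomology computations from Lemma~\ref{init} and Lemma~\ref{cocycle} to produce the required subgroup of index $2k(k-1)$ in $H$. Since $M = S^{(k-2,2)}$ (reduced mod $2$) is identified with the irreducible summand $P_3$ of the pair-permutation module $P = F \oplus P_2 \oplus P_3$, and $\dim H^2(G,P_3) = 1$ (as established just before Lemma~\ref{cocycle}), the nonsplit extension $H$ is governed by a cocycle $\delta$ that is nontrivial on the $P_3$-component. First I would note that because $M$ is irreducible and nontrivial, any proper subgroup of $H$ mapping onto $G$ must avoid $M$ entirely (so $M$ is the unique minimal normal subgroup, and in particular $M \le \Phi(H)$ is not automatic here — but irreducibility makes $M$ minimal normal, which is what we need). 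The target degree $2k(k-1) = \frac{1}{2}\cdot 4\cdot k(k-1)$ strongly suggests we want a subgroup $H_0$ with $\pi(H_0) = Y \cong S_{k-2}$ and $[M : M \cap H_0] = 4$, i.e.\ $M \cap H_0$ is a codimension-$2$ $Y$-invariant subspace of $M$ on which the extension restricted to $\pi^{-1}$ of $Y$ splits.

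The key steps, in order: (1) Restrict the extension to $E := \pi^{-1}(Y)$, giving $1 \to M \to E \to Y \to 1$ with class $\mathrm{res}^G_Y[\delta] \in H^2(Y, M_Y)$. (2) Decompose $M_Y$; from the setup, $M = P_3 = (P_1+P_2)^\perp$, and $C_{P_3}(Y)$ is $1$-dimensional spanned by (a suitable representative of) $u$, and dually $P_3$ has a $1$-dimensional trivial $Y$-quotient. I want to find a $Y$-submodule $M_1 \le M$ of codimension $2$ such that the image of $[\delta]$ in $H^2(Y, M/M_1)$ vanishes; then the pullback of a complement splits and yields a subgroup $E_0 \le E$ with $E_0 \cap M = M_1$, so $[E : E_0] = 4$, hence $[H : E_0] = 2k(k-1)$. (3) Show $E_0$ is corefree in $H$: its core is normal in $H$, hence contains $M$ (the unique minimal normal subgroup) or is trivial; but $E_0 \not\supseteq M$ since $[E_0 : E_0\cap M]$-type considerations force $\pi(\mathrm{core}) \ne G$, and a proper normal subgroup of $A_k$ is trivial, so $\mathrm{core} \le M$, forcing it trivial. (4) The resulting coset action is faithful and transitive of degree $2k(k-1)$.

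The main obstacle is step~(2): controlling $H^2(Y, M/M_1)$ for the right codimension-$2$ quotient, and this is exactly where Lemma~\ref{cocycle} enters. The point of Lemma~\ref{cocycle} is that $\delta(g_1,g_1) \notin P_0$ while $\delta(g_2,g_2) \in P_0$, where $P_0 = (Fu)^\perp$ is the $Y$-invariant hyperplane of $P$ containing $P_1 + P_2$. Restricting to $M = P_3$: the cocycle's $P_3$-component, paired against $u$ (the fixed vector), is nonzero on $g_1$ and zero on $g_2$. I would use this to pin down exactly which $Y$-quotients of $M$ kill the class. Concretely, the hyperplane $P_0 \cap M$ of $M$ is $Y$-invariant, and $M/(P_0\cap M)$ is the trivial module $\t$-reduction, i.e.\ $F$; the class in $H^2(Y, F)$ is detected by $(\delta(g_1,g_1),u)$ and $(\delta(g_2,g_2),u)$, which by Lemma~\ref{cocycle} are $1$ and $0$. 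Since $H^2(Y,F) = H^2(S_{k-2},F)$ is $2$-dimensional spanned by classes distinguished by their values on $g_1^2$-type and $g_2^2$-type double transpositions, I need a \emph{second} $Y$-invariant hyperplane $M_1'$ of $M/(P_0\cap M)\cdots$ — rather, a codimension-$1$ $Y$-submodule $M_2$ of $M$ with $M_2 \ne P_0 \cap M$, chosen so that on $M/(M_2 \cap (P_0\cap M))$ (a $2$-dimensional $Y$-quotient) the class dies. One expects $M$ to have $C^*$ (the dual of the socle / a quotient) of dimension at most $3$ matching $C_P(Y)$ minus the $F,P_2$ pieces; actually $C_{M}(Y)$ and the trivial quotient of $M_Y$ are both $1$-dimensional, so $M$ has a \emph{unique} codimension-$1$ $Y$-submodule, namely $P_0 \cap M$. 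Therefore I must instead take $M_1 := P_0 \cap M \cap (\text{preimage of the }g_2\text{-hyperplane})$, wait — more carefully: I take the codimension-$2$ subspace $M_1 = (Fu)^\perp \cap W$ inside $M$ where $W$ is chosen $Y$-invariant of codimension $1$ in $P_0 \cap M$ adapted so that the surjection $M_Y \to M/M_1$ (a $2$-dimensional $FY$-module) carries $[\delta|_Y]$ to $0$; the existence of such $W$ follows because the $2$-dimensional space $H^2(Y,F)^{\oplus}$ is spanned by the $g_1$-type and $g_2$-type characters, Lemma~\ref{cocycle} computes the $g_1,g_2$ values of our class, and we can cancel them off a suitable quotient. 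I would finish by verifying the split and corefreeness as in steps (3)–(4), closing the argument exactly as in the proof of Theorem~\ref{upper}(i) mutatis mutandis.
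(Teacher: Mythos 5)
Your overall architecture (restrict the extension to the preimage of a subgroup sitting over $\{1,2\}$, split a small quotient extension to get a subgroup of index $2k(k-1)$, then check corefreeness via the unique minimal normal subgroup) is the right one, and steps (3)--(4) are fine. But your key step (2) cannot be carried out, and the obstruction is exactly Lemma~\ref{cocycle}(i), which you cite as if it helps you. As you yourself observe, $M_Y$ has a unique trivial quotient, so $M$ has a unique $Y$-invariant hyperplane, namely $M_0 = P_0/(P_1+P_2)$ (in the paper's identification $M = P/(P_1+P_2)$). Any $Y$-invariant codimension-$2$ subspace $M_1$ is then forced to lie inside $M_0$: the $2$-dimensional module $M/M_1$ has only trivial composition factors (since $S_{k-2}$ has no nontrivial irreducible $\F_2$-module of dimension $\le 2$ for $k\ge 7$), hence has a trivial quotient, hence $M_1$ is contained in a $Y$-invariant hyperplane, which must be $M_0$. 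Consequently $H^2(Y,M/M_1)$ maps onto $H^2(Y,M/M_0)=H^2(Y,\F_2)$, and the image of the extension class there is \emph{nonzero}: for an involution $g$ and a normalized cocycle with trivial coefficients in characteristic $2$, the value $\delta(g,g)$ is a coboundary invariant, and Lemma~\ref{cocycle}(i) says $\delta(g_1,g_1)\notin P_0$, i.e.\ the induced class $\epsilon$ has $\epsilon(g_1,g_1)=1$. So the class survives in every quotient $H^2(Y,M/M_1)$, no subgroup $H_0$ with $\pi(H_0)=Y$ and $[M:M\cap H_0]=4$ exists, and your route is closed for \emph{every} nonsplit extension of $M$ by $A_k$ (as $H^2(G,P_3)$ is $1$-dimensional, there is only one class up to scalar).

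The paper's proof avoids this by descending one step further in the group rather than one step further in the module: it sets $X=G_{12}\cong A_{k-2}$ (index $k(k-1)$ in $G$, so $g_1=(1\,2)(3\,4)\notin X$) and $L=\pi^{-1}(X)$, and considers the central extension $1\to M/M_0\to L/M_0\to A_{k-2}\to 1$ using only the codimension-$1$ subspace $M_0$. Here the only obstruction is detected by $g_2=(3\,4)(5\,6)$, and Lemma~\ref{cocycle}(ii) ($\delta(g_2,g_2)\in P_0$) shows this extension splits, so $L/M_0\cong \ZZ/2\times A_{k-2}$ and one gets $L_0$ with $[H:L_0]=k(k-1)\cdot 2=2k(k-1)$. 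In short: both parts of Lemma~\ref{cocycle} matter, but part (i) is a \emph{negative} statement from the point of view of your construction (it is used elsewhere in the paper to show the extension over $A_{k}$ and over $Y$ is genuinely nonsplit), and only part (ii) produces the splitting --- over $A_{k-2}$, not over $S_{k-2}$.
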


\begin{proof}   Identify $M$ with $P/(P_1 + P_2)$, and define $M_0$ to be the $Y$-invariant
hyperplane $P_0/(P_1 + P_2)$. Let $\pi: H \rightarrow A_k$ be the canonical map with kernel $M$, and 
set $J = \pi^{-1}(Y)$ and $L= \pi^{-1}(X)$.  

We have $J = N_H(M_0)$.   Consider the group 
$L/M_0$.  By Lemma \ref{cocycle},  $L/M_0 \cong \ZZ/2 \times A_{k-2}$.   It follows that $H$ contains a subgroup
$L_0$ containing $M_0$ with $L_0/M_0 \cong A_{k-2}$.   Thus, $[H:L_0]=2k(k-1)$.   Since $M$ is the unique
minimal normal subgroup of $H$, it follows that $L_0$ is corefree in $H$.  Hence  
$H$ has a faithful transitive permutation representation of degree $2k(k-1)$.
\end{proof}

We can now prove Theorem \ref{upper}(ii). 

\begin{theorem}  Let $k \ge 7$.   Then there is a nonsplit extension 
\[
1 \rightarrow M \rightarrow H \rightarrow A_k \rightarrow 1,
\]
with $M$ an elementary abelian $2$-group, such that $H$ has a faithful transitive permutation representation of degree at most  $2k(k-1)$.
\end{theorem}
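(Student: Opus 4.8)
The plan is to invoke Corollary \ref{k3mod4}, which already settles $k \equiv 3 \pmod 4$, and to carry out the construction of \S\ref{example2} in the remaining residue classes $k \equiv 0,1,2 \pmod 4$. So assume $k \not\equiv 3 \pmod 4$, and keep the notation $G = A_k$, with $Y \cong S_{k-2}$ the stabiliser of $\{1,2\}$, $X = G_{12} \cong A_{k-2}$, and $P = (\F_2)_Y^G$ the self-dual permutation module of $G$ on pairs, so that $\dim H^2(G,P) = 2$ by Lemma \ref{init}.

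The first move is to isolate the abstract input the argument needs: an $\F_2 G$-module $M$ such that \textup{(a)} the socle $S_0$ of $M$ as a $G$-module is simple; \textup{(b)} $H^2(G,M) \ne 0$; and \textup{(c)} $M$ has an $\F_2 Y$-submodule $M_0$ of codimension $1$, with $S_0 \not\subseteq M_0$, for which the induced extension $1 \to M/M_0 \to L/M_0 \to X \to 1$ splits, where $L = \pi^{-1}(X)$ and $M_0$ is regarded as $X$-invariant via $X \le Y$. Given such an $M$, take a nonsplit extension $1 \to M \to H \to G \to 1$ (one exists by (b)). Then $S_0$ is the unique minimal normal subgroup of $H$: a minimal normal subgroup inside $M$ must equal $S_0$, while one not inside $M$ would meet $M$ trivially and centralise it, forcing $H = N \times M$ against non-splitness. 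By (c) there is a subgroup $L_0 \supseteq M_0$ with $L_0/M_0 \cong X$, whence $L_0 \cap M = M_0$ and $[H:L_0] = [M:M_0]\,[G:X] = 2k(k-1)$; and $L_0$ is corefree because $L_0 \cap M = M_0$ does not contain $S_0$. So $H$ acts faithfully and transitively on the $2k(k-1)$ cosets of $L_0$.

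It remains to exhibit such an $M$ for $k \equiv 0,1,2 \pmod 4$, as a subquotient of $P$, and to verify (c). For (b) one reads off the $\F_2 S_k$-submodule structure of $P$ from its Specht filtration with quotients $S^{(k)}, S^{(k-1,1)}, S^{(k-2,2)}$ and the characteristic-$2$ decompositions of these Specht modules; combined with $\dim H^2(G,\F_2) = 1$, the vanishing $H^2(G,D^{(k-1,1)}) = 0$ from \cite{KP}, and $\dim H^2(G,P) = 2$, this isolates the subquotient $M$ of $P$ carrying the extra cohomology and shows it has simple socle, which gives (a). For the hyperplane in (c) one chooses a suitable vector $v$ in $C_P(Y) = \langle u, f, x_1+x_2\rangle$, with $\operatorname{Stab}_G(\langle v\rangle) = Y$, so that $v^\perp$ is $Y$-invariant and cuts out a hyperplane $M_0$ of $M$; the parity identities $(u,u) \equiv \binom{k-2}{2}$, $(u,x_i) \equiv k-3$ and $(u,f) \equiv \binom{k-2}{2} \pmod 2$ show that the choice $v = u$, valid for $k \equiv 3 \pmod 4$, must be replaced by a different element of $C_P(Y)$ in the other residue classes, the correct one depending on $k \bmod 4$. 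Finally the splitting in (c) is checked at the level of $2$-cocycles exactly as in Lemma \ref{cocycle}: using coset representatives $g_{ij}$ for $Y$ in $G$ taking $\{1,2\}$ to $\{i,j\}$ and the Eckmann--Shapiro isomorphism $H^2(Y,\F_2) \cong H^2(G,P)$, one shows that a cocycle $\delta$ representing the relevant class satisfies $\delta(g_2,g_2) \in v^\perp$ with $g_2 = (34)(56) \in X$ (so its reduction is a coboundary on $X$) and $\delta(g_1,g_1) \notin v^\perp$ with $g_1 = (12)(34) \in Y$; as in \S\ref{example2} these are proved by induction on $k$ in steps of $4$, decomposing $v$ along the analogous configurations of four extra points, with a bounded set of small base cases verified in Magma \cite{M}.

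The hard part is this last step in each residue class: once $k \not\equiv 3 \pmod 4$ the inner products appearing in the inductive decomposition no longer all vanish, so the non-vanishing terms have to be absorbed into the choice of $v$ (and of $M$) or tracked carefully through the induction --- finding a clean, uniform replacement for Lemma \ref{cocycle} is where the real work lies. Everything else is bookkeeping parallel to the proof of Corollary \ref{k3mod4}.
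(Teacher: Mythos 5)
Your proposal correctly defers the case $k \equiv 3 \pmod 4$ to Corollary \ref{k3mod4}, and your abstract formulation of what is needed (a module $M$ with simple socle, $H^2(G,M)\ne 0$, and a $Y$-hyperplane $M_0$ over which the extension restricted to $X$ splits) is a fair axiomatization of the proof of that corollary. But for $k \equiv 0,1,2 \pmod 4$ you do not actually produce such an $M$: you describe what would have to be done (choose a different subquotient of $P$, a different vector $v \in C_P(Y)$ depending on $k \bmod 4$, and prove a replacement for Lemma \ref{cocycle} by a new induction with new Magma base cases), and then you state explicitly that this "is where the real work lies." Since that step is the entire content of the theorem in the remaining residue classes, what you have is a plan, not a proof. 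The difficulty is also real, not just unexecuted bookkeeping: the decomposition $P = P_1 \oplus P_2 \oplus P_3$ into irreducible summands is special to $k \equiv 3 \pmod 4$ (for $k$ even, $f \in P_2$, the form degenerates on various submodules, and the reductions of the Specht modules $S^{(k-1,1)}$, $S^{(k-2,2)}$ are no longer irreducible or no longer split off), so the identification of a subquotient with simple socle carrying the nontrivial class, and of a suitable $Y$-invariant hyperplane, would require a genuinely separate analysis in each congruence class.

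The paper avoids all of this with a short reduction that your argument is missing. Write $k = j - i$ with $0 < i \le 3$ and $j \equiv 3 \pmod 4$, and take the extension $1 \to M \to H \to A_j \to 1$ of Corollary \ref{k3mod4} together with its corefree subgroup $L_0$ of index $2j(j-1)$. Let $J$ be the preimage in $H$ of the natural $A_k \le A_j$. This $J$ is still a nonsplit extension of $M$ by $A_k$, because by Lemma \ref{cocycle} the coset of $(1\,2)(3\,4)$ consists of elements of order $4$, so no complement can exist. The $J$-orbits on the $2j(j-1)$ cosets of $L_0$ have sizes $2$, $2k$ and $2k(k-1)$; taking a minimal subgroup $J_0$ with $J = J_0M$, one has $N := J_0 \cap M \ne 1$, and some orbit on which $N$ acts nontrivially yields a nonsplit extension by $A_k$ with a faithful transitive representation of degree at most $2k(k-1)$. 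If you want to salvage your direct approach you would need to carry out the module-theoretic and cocycle computations in each residue class; otherwise the restriction argument is the missing idea.
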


\begin{proof}  If $k \equiv 3 \pmod 4$, the conclusion follows from the previous result, so assume this is not the case.   
Write $k = j - i$, where $0 < i \le 3$ and $j \equiv 3 \pmod 4$.   Let 
\[
1 \rightarrow M \rightarrow H \rightarrow A_j \rightarrow 1.
\]
be the nonsplit  extension constructed in Corollary \ref{k3mod4}, and let $L_0$ be a corefree subgroup of $H$ of index $2j(j-1)$, 
with coset space $\Gamma = (H:L_0)$. Define $J$ to be the subgroup of $H$ containing $M$ with $J/M = A_k$.   

Observe that $J$ is a nonsplit extension of $M$ by $A_k$, since the coset $xM$ for $x = (1\,2)(3\,4) \in H/M = A_j$ 
consists of elements of order $4$ by Lemma \ref{cocycle}.   
 Also, the orbits of $J$ on $\Gamma$ have size $2$,  $2k$ or $2k(k-1)$.    Let   $J_0$ be a minimal subgroup
 of $J$  with $J = J_0M$.   Since $J$ is a nonsplit extension,  $N:=J_0 \cap M \ne 1$.   So $J_0$ is a nonsplit extension
 of $N$ by $A_k$.    Some orbit of $J$ must be nontrivial for $N$ and so the image of $J$ on this orbit must be nonsplit.
The conclusion follows.
 \end{proof}

\noindent {\bf Remark } It  is not hard to see that in fact $J_0$ is faithful only on the orbit of size $2k(k-1)$   (for $k$ sufficiently large,  this follows by Theorem \ref{lower}).

\end{document}